\documentclass[12pt]{amsart}
\usepackage[osf,sc]{mathpazo}
\usepackage{amssymb}

\usepackage{geometry}\geometry{a4paper,left=30mm, right=30mm, top=35mm, bottom=35mm}
\usepackage{bm}
\usepackage{bbm}
\usepackage{graphicx}
\usepackage{hyperref}
\hypersetup{
	colorlinks=true, 
	linktoc=all,     
	linkcolor=blue,
	citecolor=red,
	filecolor=black,
	urlcolor=blue	
}
\usepackage{enumerate}
\usepackage[inline]{enumitem}
\makeatletter
\newcommand{\inlineitem}[1][]{%
	\ifnum\enit@type=\tw@
	{\descriptionlabel{#1}}
	\hspace{\labelsep}%
	\else
	\ifnum\enit@type=\z@
	\refstepcounter{\@listctr}\fi
	\quad\@itemlabel\hspace{\labelsep}%
	\fi} \makeatother
\parindent=0pt
\newcommand{\ga}{\alpha}
\newcommand{\gb}{\beta}
\newcommand{\gga}{\gamma}

\newcommand{\gth}{\theta}

\newcommand{\gl}{\lambda}

\newcommand{\gp}{\pi}

\newcommand{\gr}{\rho}

\newcommand{\gf}{\phi}


\newcommand{\Gd}{\Delta}

\newcommand{\Gl}{\Lambda}

\newcommand{\Gs}{\Sigma}

\newcommand{\Gom}{\Omega}


\newcommand{\subs}{\subset}

\newcommand{\sbnq}{\subsetneq}

\newcommand{\bs}{\backslash}

\newcommand{\ti}{\tilde}

\newcommand{\mbb}{\mathbb}

\newcommand{\us}{\underset}
\newcommand{\os}{\overset}

\newcommand{\lra}{\longrightarrow}

\newcommand{\N}{\mbb N}
\newcommand{\Z}{\mbb Z}

\newcommand{\Ra}{\Rightarrow}

\newcommand{\ora}{\overrightarrow}

\newcommand{\equ}[1]{%
	\begin{equation*}
		#1
	\end{equation*}
}
\newcommand{\equa}[1]{%
	\begin{equation*}
		\begin{aligned}
			#1
		\end{aligned}
	\end{equation*}
}
\newcommand{\equan}[2]{%
	\begin{equation}
	\label{Eq:#1}
	\begin{aligned}
	#2
	\end{aligned}
	\end{equation}
}



\theoremstyle{plain}
\newtheorem{theorem}{Theorem}[section]

\newtheorem{prop}[theorem]{Proposition}
\newtheorem{lemma}[theorem]{Lemma}
\newtheorem{cor}[theorem]{Corollary}

\newtheorem{ques}[theorem]{Question}
\newtheorem{claim}[theorem]{Claim}

\makeatletter
\def\namedlabel#1#2{\begingroup
	\def\@currentlabel{#2}%
	\label{#1}\endgroup
}
\makeatother

\newtheorem*{thmOmega}{\bf{Theorem} $\bm{\Gom}$}
\newtheorem*{thmSigma}{\bf{Theorem} $\bm{\Gs}$}
\newtheorem*{thmLambda}{\bf{Theorem} $\bm{\Gl}$}
\theoremstyle{definition}
\newtheorem{defn}[theorem]{Definition}

\theoremstyle{remark}
\newtheorem{remark}[theorem]{Remark}
\newtheorem{example}[theorem]{Example}
\numberwithin{equation}{section}
\begin{document}
\title[On Rational Sets]{On Rational Sets in Euclidean Spaces and Spheres}

\author{C P Anil Kumar}
\address{School of Mathematics, Harish-Chandra Research Institute, HBNI, Chhatnag Road, Jhunsi, Prayagraj (Allahabad), 211 019,  India. \,\, email: {\tt akcp1728@gmail.com}}

\keywords{rational set, rational distances, spheres, Euclidean spaces,
elliptic rationals, hyperbolic rationals}

\subjclass[2010]{Mathematics Subject Classifications: Primary: 11J17, 11J83, Secondary: 14G05}

\thanks{This work is done while the author is a Post Doctoral Fellow at Harish-Chandra Research Institute, Prayagraj(Allahabad).}

\date{\sc \today}
\begin{abstract}
	 For a positive rational $l$, we define the concept of an $l$-elliptic and an $l$-hyperbolic rational set in a metric space. In this article we examine the existence of (i) dense and (ii) infinite $l$-hyperbolic and $l$-ellitpic rationals subsets of the real line and unit circle. For the case of a circle, we prove that the existence of such sets depends on the positivity of ranks of certain associated elliptic curves. We also determine the closures of such sets which are maximal in case they are not dense. In higher dimensions, we show the existence of $l$-ellitpic and $l$-hyperbolic rational infinite sets in unit spheres and Euclidean spaces for certain values of $l$ which satisfy a weaker condition regarding the existence of elements of order more than two, than the positivity of the ranks of the same associated elliptic curves. We also determine their closures. A subset $T$ of the $k$-dimensional unit sphere $S^k$ has an antipodal pair if both  $x,-x\in T$ for some $x\in S^k$. In this article, we prove that there does not exist a dense rational set $T\subs S^2$ which has an antipodal pair by assuming Bombieri-Lang Conjecture for surfaces of general type.  We actually show that the existence of such a dense rational set in $S^k$ is equivalent to the existence of a  dense $2$-hyperbolic rational set in $S^k$ which is further equivalent to the existence of a dense 1-elliptic rational set in the Euclidean space $\mbb{R}^k$. 
\end{abstract}
\maketitle
\section{\bf{Introduction}}
A metric space $(X,d)$ is rational if the distance of any two points in $X$ is rational. The subject of existence of rational subsets has an interesting history. On any line in $k$-dimensional space $\mbb{R}^k$, there exists a dense rational set. In the 1940s, S.~Ulam asked the following question.

\begin{ques}
Does there exist a dense rational subset in the plane?
\end{ques}

This was mentioned in a 1945 paper by N.~H.~Anning and P.~Erd\"{o}s~\cite{MR0013511} where they proved that if there exists an infinite integral set $X$ in the plane then all the points of $X$ must be collinear 
(a set $X$ is integral if all the members of $\Gd(X)$ are integers). By a similar  argument, we can show that we cannot have infinitely many points in $k$-dimensional space $\mbb{R}^k$ not all on a line, with all the distances being integral (refer~\cite{MR0013511}). 
The paper~\cite{MR0013511} may have been the first to exhibit a rational set in the plane whose closure is a circle. The following theorem gives a characterization of circles in the plane for which there exists a dense rational set.
\begin{theorem}
	\label{theorem:Circle}
For each positive number $\gr$, the following three conditions are equivalent.
\begin{enumerate}
\item $\gr^2$ is rational;
\item on each circle of radius $\gr$ there exists a dense rational set;
\item on each circle of radius $\gr$ there is a rational 3-set.
\end{enumerate}	 
\end{theorem}
For a proof of the above theorem, refer V.~Klee, S.~Wagon~\cite{MR1133201}, Theorem 10.2, pages 132-135. It was observed by Erd\"{o}s and a few others that subsets of the plane which contain dense and infinite rational sets must be very special. The two cases, of any general line in the plane and a circle, the square of whose radius is rational, completely determines those  irreducible algebraic curves in the plane $\mbb{R}^2$ which contain dense rational sets. In fact in 2010, J.~Solymosi, F.~Zeeuw~\cite{MR2579704} have proved the following two theorems.
\begin{theorem}
Every rational set of the plane $\mbb{R}^2$ has only finitely many points in common with an algebraic curve defined over $\mbb{R}$, unless the curve has a component which is a line or a circle, the square of whose radius is a rational. 
\end{theorem} 
\begin{theorem}
If a rational set $S$ in the plane $\mbb{R}^2$ has infinitely many points on a line (resp. a circle), then all but 4 (resp. 3) points of $S$ are on a line (resp.  a circle, the square of whose radius is rational).
\end{theorem}

Recently in 2018, J.~Shaffaf~\cite{MR3835612} proved the following theorem. 
\begin{theorem}
	\label{theorem:Shaffaf}
(Assuming the Bombieri-Lang Conjecture in arithmetic geometry for the surfaces of general type) There is no rational set that is dense in the plane.
\end{theorem}
He also proves that in this paper that if $S$ is an infinite rational set in the plane then either all but at most four points of $S$ are on a line or all but at most three points of $S$ are on a circle, the square of whose radius is rational. 

Not much work has been done on analogous higher dimensional problems of the existence of dense rational sets in $\mbb{R}^k$ for $k>2$ and $S^k$ for $k\geq 2$.
Steiger~\cite{Ste} showed that for each $k\geq 2$, $\mbb{R}^k$ contains arbitrarily large rational sets that do not lie on hyperplane. In fact as a consequence of Theorem~\ref{theorem:Circle} and Exercise 8 on page 136 of~\cite{MR1133201} whose solution is given on page 294 of~\cite{MR1133201}, we can conclude that for each rational $\gr>0$ and for each $k\geq 2$, each sphere of radius $\gr$ in $\mbb{R}^k$ contains an infinite rational set that does not lie in any hyperplane.   Here in Corollary~\ref{cor:Integral}, we prove something more for the spheres $S^k,k\geq 2$. We prove that the unit sphere $S^k$ for $k\geq 2$ contains an infinite rational set with an antipodal pair that does not lie in any hyperplane of $\mbb{R}^{k+1}$.

We now introduce a few definitions in order to state the three main Theorems~\ref{theorem:S1}, ~\ref{theorem:Equivalence},~\ref{theorem:SphereFaltings}. 
\begin{defn}
Let $(X,d)$ be a metric space. We say that $X$ is a rational set if the distance 
set \equ{\Gd(X)=\{d(x,y)\mid x,y\in X\}\subseteq \mbb{Q}^{+}\cup\{0\}.}
\end{defn}
\begin{defn}
Let $r=\frac pq, p,q \in \Z,gcd(p,q)=1$ be a rational number.
\begin{itemize}
\item We say $r$ is an elliptic rational if $q^2+p^2=\square$. 
\item We say is a hyperbolic rational if $q^2-p^2=\square$.
\end{itemize}
\end{defn}
\begin{defn}
Let $(X,d)$ be a metric space and $l>0$ be a positive rational. Let 
\equ{\mbb{Q}_{elliptic}=\{r\in \mbb{Q}\mid r\text{ is an elliptic rational}\}}
and 
\equ{\mbb{Q}_{hyperbolic}=\{r\in \mbb{Q}\mid r\text{ is a hyperbolic rational}\}.}
We say that $X$ is an $l$-elliptic rational set if the distance set 
\equ{\Gd(X)=\{d(x,y)\mid x,y\in X\}\subs \mbb{Q}^{+}\cup\{0\}}
and there exists a point $P\in X$ such that 
\equ{\Gd_P(X)=\{d(P,x)\mid x\in X\}\subseteq l(\mbb{Q}_{elliptic}^{+}\cup\{0\})=\{lr\mid r\in \mbb{Q}_{elliptic}^{+}\cup\{0\}\}.}
$P$ is called the center of ellipticity.
We say that $(X,d)$ is an 
elliptic rational set if $X$ is an $l$-elliptic rational set for some positive 
rational $l$. 

We say that $X$ is an $l$-
hyperbolic rational set if the distance set 
\equ{\Gd(X)=\{d(x,y)\mid x,y\in X\}\subs \mbb{Q}^{+}\cup\{0\}}
and there exists a point $P\in X$ such that 
\equ{\Gd_P(X)=\{d(P,x)\mid x\in X\}\subs l(\mbb{Q}_{hyperbolic}^{+}\cup\{0\})=\{lr\mid r\in\mbb{Q}_{hyperbolic}^{+}\cup\{0\} \}.}
$P$ is called the center of hyperbolicity. We say that $X$ is a 
hyperbolic rational set if $X$ is an $l$-hyperbolic rational set for some positive 
rational $l$. 
\end{defn}
\begin{defn}
Let $(X,d)$ be a metric space. We say a subset $S\subseteq X$ is a maximal $l$-elliptic rational subset if it is an $l$-elliptic rational set and there does not exist a set $X\supseteq Y\supsetneq S$ such that $Y$ is also an $l$-elliptic rational set. The definition of a maximal $l$-hyperbolic rational subset of $X$ is similar. 
\end{defn}
\begin{example}	
	\label{Example:DenseSets}
 \begin{itemize}
 \item The metric space $l \mbb{Q}_{elliptic}=\{lr\big\vert \mid r\mid\text{ is an elliptic rational}\}$ $\subset \mbb{R}$ is a dense $l$-elliptic rational set. The origin is the center of ellipticity.
\item The metric space $l \mbb{Q}_{hyperbolic}=\{lr\big\vert \mid r\mid\text{ is an hyperbolic rational}\}\subset \mbb{R}$ is a dense $l$-hyperbolic rational set. The origin is the center of hyperbolicity.
\item For a positive rational $l$, the metric space $S=\{(\frac{l(a^2-b^2)}{a^2+b^2},\frac{2lab}{a^2+b^2})\mid a,b\in \Z,a^2+b^2=\square \neq 0\}\subs S^1_l=\{(x,y)\in \mbb{R}^2\mid x^2+y^2=l^2\}$ is a $2l$-hyperbolic rational set. All points of $S$ are centers of hyperbolicity. We will show in Section~\ref{sec:DensityCircle} that this set $S$ is dense in $S^1$.
 \end{itemize}
\begin{remark}
We will give a criterion later, in Section~\ref{sec:DensityCircle} for the unit circle, as to when it does, or does not have, an infinite subset, which is an $l$-hyperbolic rational set for any $l\neq 2$, or an $l$-elliptic rational set for any $l>0$, and also determine, the closure of these infinite sets, when they are maximal subsets of $S^1$.
\end{remark}
\end{example}
\begin{defn}
Let $(X,d)$ be a metric space. We say that $X$ is a 
pythogorean rational set if $X$ is either a hyperbolic rational set or an elliptic rational set. 
\end{defn}

\subsection{The main theorem}
Now we state the main theorems of this article.

\begin{thmOmega}[First Main Theorem]
	\namedlabel{theorem:S1}{$\Gom$}
~\\
	\begin{enumerate}
		\item Let $l\neq 2$ be a positive rational. There exists an infinite $l$-hyperbolic rational subset $X$ of the unit circle $S^1$ (dense if $l>2$) if and only if the elliptic curve \equ{E^{hyperbolic}_l:y^2=x(x-1)(x-\frac{l^2}{4})} defined over rational numbers has positive rank. If $l<2$ and $E^{hyperbolic}_l$ has positive rank then  such a set $X$ which is maximal, cannot be dense in $S^1$ and its closure is an arc in $S^1$ of length $4\gth$ where, $l=2sin(\gth)$ for $\gth\in (0,\frac{\gp}{2})$.
		\item Let $l>0$ be any positive rational. There exists a dense $l$-ellitpic rational subset $X$ of the unit circle $S^1$ if and only if the elliptic curve \equ{E^{elliptic}_l:y^2=x(x+1)(x-\frac{l^2}{4})} defined over rational numbers has positive rank.
	\end{enumerate}
In both cases we can choose such sets $X$ to contain only points with rational co-ordinates and in addition $X$ is simultaneously a $2$-hyperbolic rational set with the same center.
\end{thmOmega}

\begin{thmSigma}[Second Main Theorem]
	\namedlabel{theorem:Equivalence}{$\Gs$}
	~\\
Let $k\in \N$. Let $\mbb{R}^k$ denote the k-dimensional Euclidean space and $S^k$ denote the k-dimensional unit sphere. The following are equivalent.
\begin{enumerate}
	\item There exists a dense rational subset $X\subs S^k$ containing an antipodal pair.
	\item There exists a dense 2-hyperbolic rational subset $X\subs S^k$.
	\item There exists a dense 1-elliptic rational subset $Y\subs \mbb{R}^k$.
\end{enumerate} 	
As a consequence we obtain, if it exists, a dense $2$-hyperbolic rational subset $X\subs S^k$ with an antipodal pair both of which are centers of ellipticity.
\end{thmSigma}	
In Corollaries~\ref{cor:Integral},~\ref{cor:LPythogorean}, we prove the existence of, an infinite $l$-elliptic rational set for $l\in \mbb{Q}^+$ and an $l$-hyperbolic rational set for $l\neq 2,l\in \mbb{Q}^+$, in the sphere $S^k$ for $k\geq 2$, and the subsets do not lie in any hyperplane of $\mbb{R}^{k+1}$, if and only if, the elliptic curves mentioned in first main Theorem~\ref{theorem:S1}, have an element of order greater than two, which is a weaker condition. As a consequence of second main Theorem~\ref{theorem:Equivalence}, however, we have the following main theorem, for the two dimensional sphere $S^2$, assuming Bombier-Lang conjecture, for the surfaces of general type, and using Theorem~\ref{theorem:Shaffaf}.
\begin{thmLambda}[Third Main Theorem]
	\namedlabel{theorem:SphereFaltings}{$\Gl$}
	~\\
	(Assuming the Bombieri-Lang Conjecture in arithmetic geometry for the surfaces of general type) There does not exist a dense rational subset $X\subs S^2$ containing an antipodal pair.
\end{thmLambda}
The main theorems lead to the following two questions in this subject.
\begin{ques}
\label{conj:One}
Let $1<k\in \mbb{N}$. Let 
\equ{\mbb{R}^k=\{(x_1,\ldots,x_k)\mid x_i\in \mbb{R}\}}
denote the $k$-dimensional Euclidean space. Then determine all the possible closures of maximal $l$-elliptic rational subsets $X\subs 
\mbb{R}^k$.
\end{ques}

\begin{ques}
\label{conj:Two}
Let $1<k\in \mbb{N}$. Let 
\equ{S^k=\{(x_0,x_1,\ldots,x_k)\in \mbb{R}^{k+1}\mid \us{i=0}{\os{k}{\sum}}x_i^2=1\}}
denote the $k$-dimensional unit sphere. Then determine all the possible closures of maximal $l$-hyperbolic rational subsets 
$X\subs S^k$.
\end{ques}

\begin{remark}
For the sets \equ{\mbb{R},S^1=\{(x,y)\in \mbb{R}^2\mid x^2+y^2=1\}.} 
we answer Questions~\ref{conj:One},~\ref{conj:Two} completely in this article. In the case of $\mbb{R}$ there exists dense $l$-elliptic and $l$-hyperbolic rational subsets $X$ for all $l\in \mbb{Q}^+$ as given in Example~\ref{Example:DenseSets}. In~\ref{Example:DenseSets}, the sets $X$ that are given as examples are actually subsets of rationals which are dense. It is also easy to see that any such maximal set in $\mbb{R}$ must be also dense in $\mbb{R}$. The case of the unit circle $S^1$ is considered in detail in Theorem~\ref{theorem:DensityCircleRationalset} and first main Theorem~\ref{theorem:S1} of Section~\ref{sec:DensityCircle}. Here we produce such rational subets of $S^1$ which also have rational co-ordinates.
Also refer to~\cite{MR0525760} by Paul D. Humke and Lawrence L. Krajewski for a 
characterization of circles in the plane whose rational points are dense in their respective
circles. 

The conjectures of existence of dense rational subsets of spheres of rational radii and of Euclidean spaces of dimension $k$ are of interest for $k>1$ (refer to~
\cite{MR1133201}, Chapter 1, Problem 10, Parts 1,2, especially Problem 10.8 on Page 135). Also regarding rational approximability of finite sets in the plane, a very related concept, interesting results are found in C.~P.~Anil Kumar~\cite{MR3605242}.
\end{remark}

\section{\bf{Density of rational sets on any circle with rational radius}}
\label{sec:DensityCircle}
We examine in this section the case of circles in the plane for various types of dense rational sets.
\begin{theorem}
\label{theorem:DensityCircleRationalset}
Let $C$ be any circle with rational radius $\gr>0$ in the Euclidean plane. Then $C$ has a dense $2\gr$-hyperbolic rational set.
\end{theorem}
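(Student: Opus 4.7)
The plan is to reduce to the unit circle centered at the origin (translation preserves distances, and scaling by the rational radius $r$ preserves both rationality of distances and the hyperbolic-rational scaling parameter) and then to exhibit an explicit dense rational set using the tangent half-angle parametrization.

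Concretely, I would set $\mathcal{A}=\{\alpha\in [0,2\pi): \cos\alpha,\sin\alpha\in\mathbb{Q}\}$. Via $t=\tan(\alpha/2)\in\mathbb{Q}$ one gets $\cos\alpha=(1-t^2)/(1+t^2)$, $\sin\alpha=2t/(1+t^2)$, so $\mathcal{A}$ is dense modulo $2\pi$. Define $Y_0=\{(\cos 2\alpha,\sin 2\alpha):\alpha\in\mathcal{A}\}$; since $\alpha\mapsto 2\alpha$ is continuous and surjective modulo $2\pi$, $Y_0$ is dense on the unit circle. The chord length formula yields
\[
|P_{2\alpha}-P_{2\beta}|=2|\sin(\alpha-\beta)|=2|\sin\alpha\cos\beta-\cos\alpha\sin\beta|,
\]
which is rational for every $\alpha,\beta\in\mathcal{A}$; hence $Y_0$ is a rational set.

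To verify the hyperbolic condition, take the basepoint $P=(1,0)\in Y_0$ (from $\alpha=0$). Then
\[
|P-(\cos 2\alpha,\sin 2\alpha)|=\sqrt{2-2\cos 2\alpha}=2|\sin\alpha|.
\]
I would write $\sin\alpha=m/n$ in lowest terms; the rationality of $\cos\alpha=\pm\sqrt{n^2-m^2}/n$ forces $n^2-m^2$ to be a perfect square, i.e.\ $m^2-n^2=-\square$, so $\sin\alpha\in\mathbb{Q}_{hyperbolic}$. Thus the distances from $P$ lie in $2(\mathbb{Q}_{hyperbolic}^+\cup\{0\})$, so $Y_0$ is a $2$-hyperbolic rational set dense in $S^1$.

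Finally, I would scale: for a circle $C$ of rational radius $r$ centered at an arbitrary point $O$, the set $O+rY_0$ is dense in $C$, all pairwise distances are $r$ times those in $Y_0$ hence rational, and distances from $O+rP$ land in $2r(\mathbb{Q}_{hyperbolic}^+\cup\{0\})$. This shows $C$ is densely $(2r)$-hyperbolic rational approximable, in particular densely hyperbolic rational approximable. I do not expect a real obstacle here: the construction is essentially a classical one, and the only semi-subtle step is recognizing the hyperbolic-rational condition as precisely the statement that a rational number and its Pythagorean complement are simultaneously rational.
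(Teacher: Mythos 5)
Your proof is correct and takes essentially the same route as the paper's: your set $Y_0$ is precisely the paper's dense set $X=\{(\frac{a^2-b^2}{a^2+b^2},\frac{2ab}{a^2+b^2}): a^2+b^2=\square\neq 0\}$ written trigonometrically, your chord computation $2|\sin\alpha\cos\beta-\cos\alpha\sin\beta|$ is the trigonometric form of the paper's identity $(a^2+b^2)(c^2+d^2)-(ad-bc)^2=(ac+bd)^2$, and the density (via rational tangent half-angles) and translation/dilation steps coincide. No gaps.
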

\begin{proof}
It is enough to prove for the unit circle centered at the origin by using
arbitrary translation and rational dilation, that it has a dense 2-hyperbolic rational set.
First we prove the following claim.
\begin{claim}
\label{cl:DoublePythogorean}
~\\
Let
\equa{\mbb{Q}_{tan} = \{\gth \in \mbb{R} \mid tan(\gth)& \text{ is rational or }
\gth=(2k+1)\frac{\gp}{2}, k\in \Z\}.\\
C_{\mbb{Q}}=\{\gth\in \mbb{R}\mid cos(\gth)&,sin(\gth)\in \mbb{Q}\}.\\
\mbb{Q}_{tan2} = \{\gth \in \mbb{R} \mid tan(\gth)& = \frac{q}{p}, p,q \in\Z,
gcd(p,q) = 1,p^2+q^2 \text{ is a square or }\\
\gth&=(2k+1)\frac{\gp}{2}, k\in \Z\}.\\
\mbb{Q}_{tan4}= \{\gth \in \mbb{R} \mid tan(\gth)& = \frac{2pq}{p^2-q^2}, p,q \in\Z, 
gcd(p,q) = 1,p^2+q^2 \text{ is a square}\}.
}
Then
\begin{enumerate}
\item $\mbb{Q}_{tan2}=2\mbb{Q}_{tan}=C_{\mbb{Q}}$ and 
$\mbb{Q}_{tan4}=2\mbb{Q}_{tan2} = 4\mbb{Q}_{tan}$.
\item $\mbb{Q}_{tan},\mbb{Q}_{tan2}=C_{\mbb{Q}},\mbb{Q}_{tan4}$ are dense additive 
subgroups of $\mbb{R}$.
\end{enumerate}
\end{claim}
\begin{proof}[Proof of claim]
We prove (1) first. We use some elementary geometry. Let $C$ be a circle with center
$O$ of unit radius. Let $A,B$ be two points on the circle such that
the $arc AB$ subtends an angle $2\gth$ at the center. Extend $OA$ to
meet the circle again at $P$. Then the $\measuredangle APB = \gth$.

Now we prove $\mbb{Q}_{tan2}=C_{\mbb{Q}}$. Let $\gth \in
C_{\mbb{Q}}$; then $cos(\gth)=\frac rs, sin(\gth)= \frac uv$ for some
relatively prime integers $r,s$ and $u,v$. So we have $\frac
{r^2v^2+u^2s^2}{s^2v^2}=1$ i.e. $r^2v^2+u^2s^2=s^2v^2$. If
$Cos(\gth)=0$ then $\gth \in \mbb{Q}_{tan2}$. Let $cos(\gth) \neq
0$. Now we observe that $tan(\gth)$ is rational and if for some
$q,p$ relatively prime integers $\frac qp=tan(\gth) = \frac
{us}{rv}$. Then there exists an integer $t$ such that $us=tq$ and
$rv=tp$ so $t^2(p^2+q^2)=r^2v^2+u^2s^2=s^2v^2$. So $t^2 \mid s^2v^2
\Ra t \mid sv$ and $p^2+q^2= \big(\frac {sv}{t}\big)^2$ a perfect
square. So $\gth \in \mbb{Q}_{tan2}$. The converse is also clear;
i.e. if $\gth \in \mbb{Q}_{tan2}$ then $cos(\gth),sin(\gth)$ are
rational.

Now we prove $\mbb{Q}_{tan2}=2\mbb{Q}_{tan}$. Let $\gth \in
\mbb{Q}_{tan}$ and if $tan(\gth)$ is undefined then $\gth$ is an odd
multiple of $\frac {\gp}{2}$. So $2\gth$ is an integer multiple of
$\gp$. So $tan(2\gth)=0$ and $2\gth \in \mbb{Q}_{tan2}$. If
$tan(\gth)=0$ then $tan(2\gth)=0$ so $2\gth \in \mbb{Q}_{tan2}$. If
$tan(\gth)=\frac qp$ with $gcd(q,p)=1$ then
$tan(2\gth)=\frac{2tan(\gth)}{1-tan^2(\gth)}=\frac{2pq}{p^2-q^2}$.
We observe that $(p^2-q^2)^2+4p^2q^2=(p^2+q^2)^2$ a perfect square.
Hence if $tan(2\gth) = \frac uv$ with $gcd(u,v)=1$ then also
$u^2+v^2$ is a perfect square because there exists an integer $t$
such that $2pq=tu,p^2-q^2=tv$. So $2\gth \in \mbb{Q}_{tan2}$.
Conversely, it is also clear that if $2\gth \in \mbb{Q}_{tan2}$ and 
$\gth \neq (2k+1)\frac{\gp}{2},k\in \Z$ then $tan(\gth)=\frac{sin(2\gth)}{1+cos(2\gth)}$
is rational. i.e. $\gth \in \mbb{Q}_{tan}$.

Hence we have $\mbb{Q}_{tan2}=2\mbb{Q}_{tan}=C_{\mbb{Q}}$. 
Now we prove $\mbb{Q}_{tan4}=2\mbb{Q}_{tan2}$. If $\gth\in \mbb{Q}_{tan2},
\gth\neq k\frac {\gp}{2},k\in \Z$ and $tan(\gth)=\frac qp,p^2+q^2=\square,gcd(p,q)=1$ then 
$tan(2\gth)=\frac{2pq}{p^2-q^2}$ and $2\gth\in \mbb{Q}_{tan4}$. If $\gth=k\frac{\gp}{2}$
then $tan(2\gth)=\frac 01$ and hence again $2\gth\in \mbb{Q}_{tan4}$. Now conversely,
if $tan(2\gth)=\frac{2pq}{p^2-q^2},p^2+q^2=\square,gcd(p,q)=1$ and 
$\gth\neq k\frac{\gp}{2},k\in \Z$ then we have $tan(\gth)=\frac qp$ or $-\frac pq$ and so 
$\gth\in \mbb{Q}_{tan2}$. Hence $\mbb{Q}_{tan4}=2\mbb{Q}_{tan2}$.

We observe that $tan(0)=0,tan(-\gth)=-tan(\gth),tan(\gth+\frac{\gp}{2})=-cot(\gth)$ 
and if $\gth_1+\gth_2 \neq (2k+1)\frac{\gp}{2},\gth_i\neq (2k+1)\frac{\gp}{2},k\in \Z,i=1,2$ 
for some $k \in \Z$ then 
$tan(\gth_1+\gth_2)=\frac{tan(\gth_1)+tan(\gth_2)}{1-tan(\gth_1)tan(\gth_2)}$.
So $\mbb{Q}_{tan}$ is an additive subgroup. 

Now we prove $(2)$. First we observe that for every integer $k \in \Z$ the function
\equ{tan_k:(\gp k - \frac{\gp}{2},\gp k+ \frac{\gp}{2}) \lra
\mbb{R}, \gth \mapsto tan(\gth)} is a homeomorphism. Hence the set
$\mbb{Q}_{tan}$ is dense in $\mbb{R}$. Now we observe that any 
finite index additive subgroup of a dense additive subgroup of reals is also dense in reals.
This completes the proof of the claim.
\end{proof}
Continuing with the proof of the theorem,
we consider two points $P_1,P_2$ on the circle $S^1$ in the plane such that 
\equ{P_1 = (\frac{a^2-b^2}{a^2+b^2},\frac{2ab}{a^2+b^2}), 
P_2 = (\frac{c^2-d^2}{c^2+d^2},\frac{2cd}{c^2+d^2})} for some integers $a,b,c,d$ such that 
$a^2 +b^2, c^2 + d^2$ are nonzero squares. Then we get that 
$d(P_1,P_2)^2 = \frac{4(ad-bc)^2}{(a^2+b^2)(c^2+d^2)}$ which is a square of a 
rational and we have $(a^2+b^2)(c^2+d^2)-(ad-bc)^2=(ac+bd)^2$.
Let 
\equ{X=\{(\frac{a^2-b^2}{a^2+b^2},\frac{2ab}{a^2+b^2})\in S^1\mid a,b\in \Z,
a^2+b^2=\square \neq 0\}} 
We note that $tan(\gth)=\frac{2ab}{a^2-b^2},a,b\in \Z,a^2+b^2=\square\neq 0$ if and only if $\gth\in \mbb{Q}_{tan4}$. So using the claim we establish that $S^1$ has a dense $2$-hyperbolic rational set $X$. 
Now by arbitrary translation and rational dilation 
Theorem~\ref{theorem:DensityCircleRationalset}
follows.  Note the coordinates after translation need not 
be rational but the dense set is a rational set, also, the distances are all a fixed rational
multiple of hyperbolic rationals.

In fact in the above proof if $(\frac{a^2-b^2}{a^2+b^2},\frac{2ab}{a^2+b^2})$ is in the dense
set $X$ then so are the possible four points 
\equ{\bigg(\pm \frac{a^2-b^2}{a^2+b^2},\pm\frac{2ab}{a^2+b^2}\bigg)}
and if $ab \neq 0$ then these four points 
\equ{\bigg(\pm \frac{(\frac{a+b}{2})^2-(\frac{a-b}{2})^2}{(\frac{a+b}{2})^2+(\frac{a-b}{2})^2},\pm\frac{2(\frac{a+b}{2})(\frac{a-b}{2})}{(\frac{a+b}{2})^2+(\frac{a-b}{2})^2}\bigg)=
\bigg(\pm \frac{2ab}{a^2+b^2},\pm \frac{a^2-b^2}{a^2+b^2}\bigg)}
do not belong to the dense set.
\end{proof}
\begin{remark}
If $p\neq 0 \neq q, p,q\in \Z,gcd(p,q)=1,p^2+q^2=\square\in \Z$ and if $\frac pq=\frac cd$
for some $c,d\in \Z$ then $c^2+d^2=\square$.
If $p\neq 0 \neq q, p,q\in \Z,gcd(p,q)=1,p^2-q^2=\square\in \Z$ and if $\frac pq=\frac cd$
for some $c,d\in \Z$ then $c^2-d^2=\square$.
\end{remark}
Now we consider the case of $l$-hyperbolic and $l$-elliptic rational sets on the unit circle.
\begin{prop}
	\label{prop:ellhypcircle}
\begin{enumerate}
\item 
Let $l$ be a positive rational. Suppose $X\subs S^1$ is an $l$-hyperbolic rational set such that $(1,0)\in X\subs S^1$ is the center of hyperbolicity and $X$ has at least three elements. If $e^{i\ga}\in X$ and $\mid e^{i\ga}-1\mid=2 \mid sin(\frac{\ga}{2})\mid$ where $sin(\frac{\ga}{2})=\frac l2\frac{p}{q}$ with $p,q\in \Z,q\neq 0,gcd(p,q)=1$, where $(q^2-p^2)$ is a square of an integer  then  $(4q^2-l^2p^2)$ must be a square of a rational.
\item Let $l$ be a positive rational. Suppose $X\subs S^1$ is an $l$-elliptic rational set such that $(1,0)\in X\subs S^1$ is the center of ellipticity and $X$ has at least three elements. If $e^{i\ga}\in X$ and $\mid e^{i\ga}-1\mid=2 \mid sin(\frac{\ga}{2})\mid$ where $sin(\frac{\ga}{2})=\frac l2\frac{p}{q}$ with $p,q\in \Z,q\neq 0,gcd(p,q)=1$, where $(q^2+p^2)$ is a square of an integer  then $(4q^2-l^2p^2)$ must be a square of a rational.
\end{enumerate}  
\end{prop}
\begin{proof}
	We prove $(1)$. The proof of $(2)$ is similar.
For $j=1,2$, let $e^{i\ga_j}\in X,0\neq  \ga_1\neq \ga_2\neq 0$. Suppose $\mid e^{i\ga_j}-1\mid=2\mid sin(\frac{\ga_j}{2})\mid$ where $sin(\frac{\ga_j}{2})=\frac l2\frac {p_j}{q_j}$ with $p_j,q_j\in \Z,q_j\neq 0\neq p_j,q_j^2-p_j^2$ is a square of an integer for $j=1,2$. We have 
\equ{\mid e^{i\ga_1}-e^{i\ga_2}\mid=2\mid sin(\frac{\ga_1-\ga_2}{2})\mid\in \mbb{Q}^{+}}
Here \equa{0\neq sin(\frac{\ga_1-\ga_2}{2})&=sin(\frac{\ga_1}{2})cos(\frac{\ga_2}{2})-sin(\frac{\ga_2}{2})cos(\frac{\ga_1}{2})\\
&=\frac{lp_1}{2q_1}\frac{\sqrt{4q_2^2-l^2p_2^2}}{2q_2}-\frac{lp_2}{2q_2}\frac{\sqrt{4q_1^2-l^2p_1^2}}{2q_1}\in \mbb{Q}^{*}}
So $4q_j^2-l^2p_j^2$ must be a square of a rational for $j=1,2$. This proves (1).  	
\end{proof}
Now we prove first main Theorem~\ref{theorem:S1}.
\begin{proof}
We prove (1) first. Suppose $X\subs S^1$ is an $l$-hyperbolic rational set having at least three elements with $(1,0)\in S$ being the center of hyperbolicity. Let $(1,0)\neq e^{i\ga}\in X$ with $sin(\frac{\ga}{2})=\frac{lp}{2q}$ with $p,q\in \Z,q\neq 0\neq p,gcd(p,q)=1$. Then we have using Proposition~\ref{prop:ellhypcircle} that, not only  $q^2-p^2=\square$ but also $4q^2-l^2p^2=\square$.

Consider the case when $0\neq p$ is an even integer. Then we have $p=2mn,q=(m^2+n^2)$ for some non-zero integers $m,n\in \Z,gcd(m,n)=1$. In this case the parities of $m$ and $n$ differ, since $gcd(p,q)=1$.  Hence $4q^2-l^2p^2=\square \Ra 4(m^2+n^2)^2-l^2(2mn)^2=z^2$ for some $z\in \mbb{Q}$. So $(x,y)=(\frac mn,\frac z{2n^2})\in \mbb{Q}^2$ with $m,n$ having different parities is a solution to the equation \equan{pEvenHyperbolic}{y^2=x^4-(l^2-2)x^2+1=(x^2-lx+1)(x^2+lx+1).}

Now consider the case when $p$ is an odd integer. In this case we have $p=mn$ and $q=\frac{m^2+n^2}{2}$ for some odd integers $m,n\in \Z$. Here $\frac{p}{q}=\frac{2mn}{m^2+n^2}$. Hence $4q^2-l^2p^2=\square \Ra 4(m^2+n^2)^2-l^2(2mn)^2=z^2$ for some $z\in \mbb{Q}$. So $(x,y)=(\frac mn,\frac z{2n^2})\in \mbb{Q}^2$ with $m,n$ both having odd parity is again a solution to Equation~\ref{Eq:pEvenHyperbolic}.

Conversely, any solution $(x,y)\in \mbb{Q}^2$ to the Equation~\ref{Eq:pEvenHyperbolic} with $x=\frac mn\neq 0,\ gcd(m,n)$ $=1$ and $y=\frac{z}{2n^2}\in \mbb{Q}$ for some unique $z\in \mbb{Q}$ gives rise to a fraction $\frac pq$ where $p\neq 0\neq q$ such that $q^2-p^2=\square,\ 4q^2-l^2p^2=\square,\ gcd(p,q)=1$. Here $p=2mn$ an even integer and $q=m^2+n^2$ an odd integer if $m,n$ have different parities. Otherwise $p=mn$ an odd integer and $q=\frac{m^2+n^2}{2}$ also an odd integer if $m,n$ both have odd parities.  So if for some $\ga\in (-\gp,\gp]$, $sin(\frac{\ga}{2})=\frac{lp}{2q}$ then $cos(\frac{\ga}{2})$ is also rational.

Substitute in Equation~\ref{Eq:pEvenHyperbolic} the following transformation defined over $\mbb{Q}$ \equan{TransOne}{x=\frac{v}{u-\frac{l^2}{4}},\ y= -2u+\bigg(\frac{v}{u-\frac{l^2}{4}}\bigg)^2+1\text{ for }u\neq \frac{l^2}{4},}
with inverse transformations also defined over $\mbb{Q}$ given by
\equan{TransTwo}{u=\frac{x^2-y}{2}-\frac{l^2}{4}+1,\  v=x\bigg(\frac{x^2-y}{2}-\frac{l^2}{2}+1\bigg).}  
We get the following elliptic curve under the bijection \equ{E_l^{hyperbolic}: v^2=u(u-1)(u-\frac{l^2}{4}) \text{ with }(u,v)=(\frac{l^2}{4},0) \text{ deleted}.}
Under the transformations given by Equations~\ref{Eq:TransOne},~\ref{Eq:TransTwo}
the point $(x,y)=(0,1)$ maps to $(u,v)=(0,0)$ and the point $(x,y)=(0,-1)$ maps to $(u,v)=(1,0)$. 

If $(x=\frac mn,y=\frac z{2n^2})$ is a solution to Equation~\ref{Eq:pEvenHyperbolic}
then \equ{sin(\frac{\ga}{2})=\frac{l}{2}\frac pq=\frac l2\frac{2mn}{m^2+n^2}=\frac{lx}{x^2+1}.}

If $l<2$ then the polynomial $x^4-(l^2-2)x^2+1$ is always positive and has no real zeroes. So for all real solutions $(x,y)$ to Equation~\ref{Eq:pEvenHyperbolic} we have $sin(\frac{\ga}{2})=\frac{lx}{x^2+1}$ takes all values in the interval $[-\frac l2,\frac l2]$.
If the elliptic curve  $E_l^{hyperbolic}$ has positive rank, then the rational values $\frac{l}{2}\frac{p}{q}$ of the form $\frac{lmn}{m^2+n^2}$ where, $(\frac mn,\frac z{2n^2})$ for some $z\in \mbb{Q}$ is a solution to Equation~\ref{Eq:pEvenHyperbolic}, gives a dense subset of the interval $[-\frac l2,\frac l2]$. Now for $l=2sin(\gth),\ \gth\in (0,\frac{\gp}{2})$ we get $\ga\in (-2\gth,2\gth)$ which is of length $4\gth$.

If $l>2$ then the polynomial $x^4-(l^2-2)x^2+1$ polynomial has four real roots 
$\pm x_1,\pm x_2$ where $0<x_1=\frac 1{x_2}<1$ and $x_1=\sqrt{\frac{l^2-2-l\sqrt{l^2-4}}{2}}$. Now we observe that $x^4-(l^2-2)x^2+1$ is non-negative only on the three intervals \equ{(-\infty,-\frac 1{x_1}],[-x_1,x_1], [\frac{1}{x_1},\infty).} Since $\frac{lx}{x^2+1}=\frac{l\frac{1}{x}}{\frac{1}{x^2}+1}$ and if $(x,y)$ is a solution to Equation~\ref{Eq:pEvenHyperbolic} then $(\frac 1x,\frac y{x^2})$ is also a solution to Equation~\ref{Eq:pEvenHyperbolic}, we consider only the interval  $[-x_1,x_1]\sbnq (-1,1)$. If the elliptic curve  $E_l^{hyperbolic}$ has positive rank, then we get a dense subset of values for $sin(\frac{\ga}{2})=\frac {lx}{x^2+1}$ in  $[-\frac {lx_1}{x_1^2+1},\frac {lx_1}{x_1^2+1}]$. Since $x_1$ is a positive root of $x^4-(l^2-2)x^2+1=(x^2-lx+1)(x^2+lx+1)$ we have $\frac {lx_1}{x_1^2+1}=1$. So we get density in the whole set $(-\gp,\gp]$ for the values of $\ga$ if $l>2$.

We prove (2). Suppose $X\subs S^1$ is an $l$-elliptic rational set having at least three elements with $(1,0)\in S$ being the center of ellipticity. Let $(1,0)\neq e^{i\ga}\in X$ with $sin(\frac{\ga}{2})=\frac{lp}{2q}$ with $p,q\in \Z,q\neq 0\neq p,gcd(p,q)=1$. Then we have using Proposition~\ref{prop:ellhypcircle} that, not only  $q^2+p^2=\square$ but also $4q^2-l^2p^2=\square$.

Consider the case when $p$ is a non-zero even integer and $q$ is an odd integer. Then we have $p=2mn,\ q=(m^2-n^2)$ for some non-zero integers $m,n\in \Z,gcd(m,n)=1$. In this case the parities of $m$ and $n$ differ, since $gcd(p,q)=1$.  Hence $4q^2-l^2p^2=\square \Ra 4(m^2-n^2)^2-l^2(2mn)^2=z^2$ for some $z\in \mbb{Q}$. So $(x,y)=(\frac mn,\frac z{2n^2})\in \mbb{Q}^2$ with $m,n$ having different parities is a solution to the equation \equan{pEvenElliptic}{y^2=x^4-(l^2+2)x^2+1=(x^2-lx-1)(x^2+lx-1).}

Consider the case when $p$ is an odd integer and $q$ is a non-zero even integer. Then we have $p=mn,\ q=\frac{m^2-n^2}{2}$ for some non-zero integers $m,n\in \Z,gcd(m,n)=1$. In this case the parities of $m$ and $n$ are both odd, since $gcd(p,q)=1$. Here $\frac{p}{q}=\frac{2mn}{m^2-n^2}$. Hence $4q^2-l^2p^2=\square \Ra 4(m^2-n^2)^2-l^2(2mn)^2=z^2$ for some $z\in \mbb{Q}$. So $(x,y)=(\frac mn,\frac z{2n^2})\in \mbb{Q}^2$ with $m,n$ both having odd parities is a solution to Equation~\ref{Eq:pEvenElliptic}.

Conversely, any solution $(x,y)\in \mbb{Q}^2$ to the Equation~\ref{Eq:pEvenElliptic} with $x=\frac mn\neq 0,\ gcd(m,n)$ $=1$ and $y=\frac{z}{2n^2}\in \mbb{Q}$ for some unique $z\in \mbb{Q}$ gives rise to a fraction $\frac pq$ where $p\neq 0\neq q$ such that $q^2+p^2=\square,\ 4q^2-l^2p^2=\square,\ gcd(p,q)=1$. Here $p=2mn$ an even integer and $q=m^2-n^2$ an odd integer if $m,n$ have different parities. Otherwise $p=mn$ an odd integer and $q=\frac{m^2-n^2}{2}$ an even integer if $m,n$ both have odd parities.  So if for some $\ga\in (-\gp,\gp]$, $sin(\frac{\ga}{2})=\frac{lp}{2q}$ then $cos(\frac{\ga}{2})$ is also rational.

Substitute in Equation~\ref{Eq:pEvenElliptic} the following transformation defined over $\mbb{Q}$  \equan{TransOOne}{x=\frac{v}{u-\frac{l^2}{4}},\ y= -2u+\bigg(\frac{v}{u-\frac{l^2}{4}}\bigg)^2-1\text{ for }u\neq \frac{l^2}{4},}
with inverse transformations also defined over $\mbb{Q}$ given by
\equan{TransTTwo}{u=\frac{x^2-y}{2}-\frac{l^2}{4}-1,\  v=x\bigg(\frac{x^2-y}{2}-\frac{l^2}{2}-1\bigg).}  
We get the following elliptic curve under the bijection \equ{E_l^{elliptic}: v^2=u(u+1)(u-\frac{l^2}{4}) \text{ with }(u,v)=(\frac{l^2}{4},0) \text{ deleted}.}
Under the transformations given by Equations~\ref{Eq:TransOOne},~\ref{Eq:TransTTwo}
the point $(x,y)=(0,1)$ maps to $(u,v)=(-1,0)$ and the point $(x,y)=(0,-1)$ maps to $(u,v)=(0,0)$. 
If $E_l^{elliptic}$ has positive rank then by a similar reasoning as in $(1)$ we get a dense rational subset $X$ of $S^1$ which is $l$-ellitpic.
\end{proof}

\begin{example}
\label{Example:EllipticCurves}
	Now we give examples of elliptic curves whose ranks are known to be zero or positive in the context of first main Theorem~\ref{theorem:S1}.
	
	Consider the elliptic curve $y^2=x(x-1)(x-36)$. The point $(x,y)=(64,336)$ lies on it. This point has infinite order which can be seen as follows. If we transform the curve using $u=9x-111$ and $v=27y$ then the curve becomes $v^2=u^3-34047u-2411586$ whose discriminant is $D=-2^43^{16}5^27^2$. The point $(x,y)=(64,336)$ becomes $(u,v)=(465,9072=2^43^47)$. So we conclude that $v^2\nmid D$. Hence by Nagell-Lutz theorem $(u,v)$ and hence $(x,y)$ is not a torsion point. 
	
	Consider the elliptic curve $y^2=x(x+1)(x-256)$. The point $(x,y)=(288,1632)$ lies on it. This point has infinite order as follows. If we transform the curve using $u=x-85$ and $v=y$ then the curve becomes $v^2=u^3-21931u-1250010$ whose discriminant is $D=-2^{16}\times257^2$. The point $(x,y)=(288,1632)$ becomes $(u,v)=(203,1632=2^5\times3\times17)$. So we conclude that $v^2\nmid D$. Hence by Nagell-Lutz theorem $(u,v)$ and hence $(x,y)$ is not a torsion point. 
	
	Consider the elliptic curve $y^2=x(x-1)(x-4)$. Let us denote by $E[a,b]$ the elliptic curve group 
	$E[a,b]:=\{(x,y)\in \mbb{Q}^2\mid y^2=x^3+ax^2+bx\}\cup\{\infty\}$. In this notation the given curve is represented as $E[-5,4]$. Let $\ga: E[a,b]\lra \frac{\mbb{Q}^*}{(\mbb{Q}^*)^2}$ be the homomorphism of abelian groups given by 
	$\ga(\infty)=1,\ga(0,0)=b \mod (\mbb{Q}^*)^2,\ga(x,y)=x \mod (\mbb{Q}^*)^2$ for $x\neq 0,(x,y)\in E[a,b]$. For the curve $E[-5,4]$, we get by using D.~Husem\"{o}ller~\cite{MR0868861}, Proposition 3.1 on Page 128 in Chapter 6, Section 3,  that the image of $\ga$ has at most four elements, since the only prime divisor dividing $b=4$ is the prime $p=2$. The divisors of $4$ are $\pm 1,\pm 2,\pm 4$ with $4$ being a square. We also have that if $(x,y)\in E[-5,4]$ then $x\geq 0 \Ra -1\mod (\mbb{Q}^*)^2,-2\mod (\mbb{Q}^*)^2$ do not belong to image of $\ga$. So image$(\ga)\subseteq \{1 \mod (\mbb{Q}^*)^2,2\mod (\mbb{Q}^*)^2\}$. Hence the image of $\ga$ has at most two elements. Also the for the elliptic curve $E'[a,b]=E[-2a,a^2-4b]$ which in this case is $E'[-5,4]=E[10,9]$, the image of $\ga$ is again at most four elements, since the only prime divisor dividing $a^2-4b=9$ is the prime $p=3$. In fact here the image of $\ga$ is contained in $\{\pm 1, \pm 3\}$ since $9$ is a perfect square. So we have \equ{\mid \frac{E[a,b]}{2E[a,b]}\mid \leq \mid \text{image}(\ga) \text{ for } \ga \text{ on }E\mid \mid \text{image}(\ga) \text{ for } \ga \text{ on }E'\mid \leq 2\times 4=8} using D.~Husem\"{o}ller~\cite{MR0868861}, proof of Theorem 3.2 on Page 129 in Chapter 6, Section 3. 
	The group $E[-5,4]$ has identity element and the three elements of order 2 giving rise the group $\frac{\Z}{2\Z}\times\frac{\Z}{2\Z}=\{\infty,(0,0),(1,0),(4,0)\}$. So let $E[-5,4] = T_2\oplus T_{2'} \oplus \Z^s$ a direct sum of a 2-torsion group $T_2$ and $T_{2'}$ a complementary torsion group corresponding to odd primes and a free abelian group of rank $s$.  Then $T_2$ is not a cyclic group. So  $T_2=\frac{\Z}{2^{\gl_1}\Z}\oplus \ti{T}_2$ for some $\gl_1\in \N$ and a non-trivial 2-torsion group $\ti{T}_2$. Hence $\frac{T_2}{2T_2}$ is also not cyclic and so has at least four elements. So we get the rank is either zero or one using the bound above. This bound turns out to be not very useful. Hence we compute the cardinality of the 2-Selmer group $S^{(2)}(E[-5,4])$ using Magma with the following code in the Magma Calculator \url{http://magma.maths.usyd.edu.au/calc/}
	
\equa{
	&> E:=EllipticCurve([0,-5,0,4,0]); &> two := MultiplicationByMMap(E,2);\\
	&> mu, tor := DescentMaps(two);&> S, AtoS := SelmerGroup(two);> \#S;
}   
We obtain the following output.
\equ{4\longleftarrow \text{ cardinality of the 2-Selmer group}.}
So we conclude the cardinality of the 2-selmer group $S^{(2)}(E[-5,4])$ is four. Hence the cardinality of $\frac{E[-5,4]}{2E[-5,4]}$ is at most $4$ since it is a subgroup of the 2-Selmer group. We got a better bound.  Therefore we have \equ{4=\mid S^{(2)}(E[-5,4]) \mid \geq \big\vert\frac{E[-5,4]}{2E[-5,4]}\big\vert= \mid \frac{T_2}{2T_2}\mid*\mid \frac{\Z^s}{2\Z^s}\mid\geq 4*2^s\Ra s=0. }
So this curve has rank zero.
Now we tabulate our results as follows.

\begin{center}
	\begin{tabular}{ |c|c| } 
		\hline
		Elliptic Curve & Rank  \\ 
		\hline 
		$E^{hyperbolic}_{4}:y^2=x(x-1)(x-4),l=4$ & zero\\
		\hline
		$E_{12}^{hyperbolic}: y^2=x(x-1)(x-36),l=12$ & positive \\ 
		\hline
		$E_2^{elliptic}: y^2=x(x+1)(x-1),l=2$ & zero (as 1 is not a congruent number,\\
		& refer K.~Conrad~\cite{KConrad}). \\
		\hline
		$E^{elliptic}_{32}:y^2=x(x+1)(x-256),l=32$ & positive \\
		\hline
	\end{tabular}
\end{center}\end{example}
\section{\bf{On a homeomorphism between sphere with a point deleted and Euclidean space
of same dimension}}
\label{sec:rationality}
In this section we introduce a map between the unit sphere with a point removed and 
the Euclidean space of the same dimension which preserve rationality of distances
of a certain type. We begin with a lemma.
\begin{lemma}
\label{lemma:SphericalCosineRule} Let 
\equ{\ora{x}=(x_0,x_1,\ldots,x_k),\ora{y}=(y_0,y_1,\ldots,y_k),\ora{z}=(z_0,z_1,\ldots,z_k)} 
be three points on the unit $k$-dimensional sphere $S^k$. Let 
\equa{\ora{x}\bigodot\ora{y} &= \us{i=0}{\os{k}{\sum}}x_iy_i=Cos(c),\\ 
\ora{y}\bigodot\ora{z} &= \us{i=0}{\os{k}{\sum}}y_iz_i = Cos(a),\\ 
\ora{z}\bigodot\ora{x} &= \us{i=0}{\os{k}{\sum}}z_ix_i = Cos(b).} 
Consider the spherical triangle $xyz$ on $S^k$. Let
$spherical\measuredangle yxz = \ga,spherical\measuredangle xzy =
\gga,spherical\measuredangle xyz = \gb$. Then we have the cosine
rule of spherical trigonometry for a suitable choice of angles 
for $a,b,c,\ga,\gb,\gga$
\begin{equation}
\label{eq:SphericalCosineRule}
\begin{aligned}
Cos(a) &= Cos(b)Cos(c) + Sin(b)Sin(c)Cos(\ga)\\
Cos(b) &= Cos(c)Cos(a) + Sin(c)Sin(a)Cos(\gb)\\
Cos(c) &= Cos(a)Cos(b) + Sin(a)Sin(b)Cos(\gga)\\
\end{aligned}
\end{equation}
\end{lemma}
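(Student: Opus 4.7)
The plan is to derive the first identity in~\eqref{eq:SphericalCosineRule} directly; the remaining two then follow by cyclically permuting the vertex labels $\ora{x},\ora{y},\ora{z}$ together with the sides and angles $(a,b,c,\ga,\gb,\gga)$. The key interpretive step is to read $\ga = spherical\measuredangle yxz$ as the ordinary Euclidean angle, measured inside the tangent space $T_{\ora{x}} S^k \subs \mbb{R}^{k+1}$, between the two unit tangent vectors at $\ora{x}$ to the great-circle arcs $xy$ and $xz$. This is the standard definition of a spherical angle and reduces the trigonometric identity to a one-line inner-product calculation in ambient Euclidean space.

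First I would construct those two unit tangent vectors explicitly. The great circle through $\ora{x}$ and $\ora{y}$ lies in the $2$-plane $\mrm{span}(\ora{x},\ora{y})$, so Gram--Schmidt gives
\equ{T_{xy}=\frac{\ora{y}-\cos(c)\ora{x}}{\sin(c)},}
since $\|\ora{y}-\cos(c)\ora{x}\|^2 = 1-\cos^2(c) = \sin^2(c)$ once we fix $c\in[0,\gp]$ so that $\sin(c)\geq 0$. An identical construction yields $T_{xz}=(\ora{z}-\cos(b)\ora{x})/\sin(b)$. By the interpretation above, $\cos(\ga) = T_{xy}\bigodot T_{xz}$; expanding the numerator and substituting $\ora{x}\bigodot\ora{x}=1$, $\ora{x}\bigodot\ora{y}=\cos(c)$, $\ora{x}\bigodot\ora{z}=\cos(b)$, and $\ora{y}\bigodot\ora{z}=\cos(a)$, a one-step cancellation leaves
\equ{\cos(\ga)=\frac{\cos(a)-\cos(b)\cos(c)}{\sin(b)\sin(c)},}
which is exactly the first line of~\eqref{eq:SphericalCosineRule} after rearrangement.

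The only real obstacle is bookkeeping around branches and degenerate configurations. The \emph{suitable choice of angles} in the statement amounts to taking $a,b,c,\ga,\gb,\gga\in[0,\gp]$ so that all three sines are nonnegative; this choice is forced once the cosine values are prescribed. When $\sin(b)=0$ or $\sin(c)=0$ (i.e.\ two of the three points coincide or are antipodal) the tangent-vector construction collapses, but then the spherical triangle itself degenerates and the identity holds trivially by direct substitution. Since the three points span a subspace of $\mbb{R}^{k+1}$ of dimension at most $3$, the ambient dimension $k$ plays no role and the argument is uniform in $k$.
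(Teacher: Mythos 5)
Your argument is essentially identical to the paper's: the paper also takes the tangent vectors $\ora{y}-\ora{x}Cos(c)$ and $\ora{z}-\ora{x}Cos(b)$ at $\ora{x}$, notes their norms are $Sin(c)$ and $Sin(b)$, and reads off $Cos(\ga)$ from their inner product, with the other two identities following by symmetry. Your extra care about normalizing the tangent vectors, fixing $a,b,c\in[0,\gp]$, and handling degenerate configurations is a welcome tightening of the paper's brief ``suitable choice of angles'' remark, but the route is the same.
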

\begin{proof}
It is easy to see that $\ora{y}-\ora{x}Cos(c),\ora{z}-\ora{x}Cos(b)$
are the tangent vectors at $x \in S^k$ tangential to the great
circles $C_{xy}$ containing $x,y$ and $C_{xz}$ containing $x,z$
respectively. It is also easy to see that 
\equ{\Vert\ora{y}-\ora{x}Cos(c)\Vert = Sin(c),\Vert\ora{z}-\ora{x}Cos(b)\Vert = Sin(b).}
So we have
\equ{(\ora{y}-\ora{x}Cos(c))\bigodot(\ora{z}-\ora{x}Cos(b)) = Sin(b)Sin(c)Cos(\ga)}
and hence it follows that
\equ{Cos(a) = Cos(b)Cos(c)+Sin(b)Sin(c)Cos(\ga)}
Hence the other equations~(\ref{eq:SphericalCosineRule}) also follow
similarly.
\end{proof}
\begin{remark}
In the above Lemma~\ref{lemma:SphericalCosineRule} a suitable choice of angles 
for $a,b,c,\ga,\gb,\gga$ exist.
\end{remark}

Now we state a theorem which is important to prove second main Theorem~\ref{theorem:Equivalence}.
of the paper.
\begin{theorem}
\label{theorem:RationalDistancePreservation}
Let $O = (0,0,\ldots,0)=0_{k+1}\in \mbb{R}^{k+1}$ denote the origin and 
let $A=\ora{e} = (1,0,0,\ldots,0) =e^{k+1}_1\in S^k$. Let 
$P=\ora{x}=(x_0,x_1,\ldots,x_k),Q=\ora{y}=(y_0,y_1,\ldots,y_k)\in S^k\bs\{\pm\ora{e}\}$.
Let $U=\ora{u},V=\ora{v}$ denote the vectors on the unit sphere
corresponding to angular bisectors of $\measuredangle AOP$ and
$\measuredangle AOQ$ in the planes of triangles $\Gd AOP,\Gd AOQ$ respectively. Consider the 
$k$-dimensional tangent space 
\equ{\mbb{T}=\mbb{T}_e(S^k)=\{1\}\times \mbb{R}^k\subs \mbb{R}^{k+1}} 
tangent to the sphere $S^k$ at $A$. Suppose after extending the vectors 
$U=\ora{u},V=\ora{v}$, they meet the tangent space $\mbb{T}$ at the points 
$B,C$ respectively. Let $\mbb{P}$ denote the plane of the triangle $\Gd ABC$ in the 
tangent space $\mbb{T}$. If 
\equ{\measuredangle AOP,\measuredangle AOQ\in \mbb{Q}_{tan4}=4\mbb{Q}_{tan}\text{ (refer to 
Claim~\ref{cl:DoublePythogorean} for definition)}}
then the following are equivalent.
\begin{enumerate}
\item $\Vert\ora{x}-\ora{y}\Vert$ is rational.
\item $sin(\frac{\measuredangle POQ}{2})$ is rational.
\item The side length $BC$ of $\Gd ABC \subs \mbb{P}$ is rational.
\item The set $S=\{A,B,C\}\subs \mbb{P}$ is a rational set.
\end{enumerate}
As a consequence the distances from $\pm A$ to points $P,Q$ are 2-hyperbolic rationals and the distances from $A$ to points $B,C$ are 1-elliptic rationals. 
\end{theorem}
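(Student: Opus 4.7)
The plan is to reduce all four conditions to the rationality of a single scalar invariant, $\sin(\gga/2)$ where $\gga=\measuredangle XOY$, and then read off the four equivalences using the hypothesis. Setting $\ga=\measuredangle AOX$ and $\gb=\measuredangle AOY$, the chord-arc relation $\|\ora{x}-\ora{y}\|^2 = 2 - 2\,\ora{x}\bigodot\ora{y} = 4\sin^2(\gga/2)$ on $S^k$ immediately yields $(1)\Llra(2)$. Since $\ga,\gb\in \mbb{Q}_{tan4}=2\mbb{Q}_{tan2}$, Claim~\ref{cl:DoublePythogorean}(1) forces $\ga/2,\gb/2\in \mbb{Q}_{tan2}=C_{\mbb{Q}}$, so $\cos(\ga/2),\sin(\ga/2),\cos(\gb/2),\sin(\gb/2)$ are rational; nonantipodality of $X,Y$ to $A$ keeps the cosines nonzero, and hence $\tan(\ga/2),\tan(\gb/2)\in \mbb{Q}$.

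Next I would locate $B,C$ explicitly. The internal bisector unit vector along $\measuredangle AOX$ is $\ora{u}=(A+X)/\|A+X\|=(A+X)/(2\cos(\ga/2))$, whose first coordinate is $\cos(\ga/2)$; intersecting the ray $O\ora{u}$ with the affine tangent hyperplane $\mbb{T}=\{x_0=1\}$ gives $B=(A+X)/(2\cos^2(\ga/2))$, and symmetrically for $C$. A short calculation shows $\ora{AB}=(X-\cos(\ga)\,A)/(2\cos^2(\ga/2))$ is a positive multiple of the standard unit tangent vector at $A$ to the great circle through $X$, of length $\sin(\ga)$; consequently $|AB|=\tan(\ga/2)$ and $|AC|=\tan(\gb/2)$, both rational by the previous paragraph. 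This immediately yields $(3)\Llra(4)$.

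It remains to prove $(2)\Llra(3)$ via a clean closed form for $BC$. Expanding $\ora{AB}\bigodot\ora{AC}$ using $A\bigodot X=\cos\ga,\ A\bigodot Y=\cos\gb,\ X\bigodot Y=\cos\gga$ produces
\equ{\cos(\measuredangle BAC)=\frac{\cos\gga-\cos\ga\cos\gb}{\sin\ga\sin\gb},}
which by Lemma~\ref{lemma:SphericalCosineRule} is exactly the spherical angle $\measuredangle XAY$ at $A$. Substituting into the Euclidean law of cosines in $\Gd ABC\subs \mbb{P}$ and multiplying through by $2\cos^2(\ga/2)\cos^2(\gb/2)$, while invoking the half-angle identities $1-\cos\gth=2\sin^2(\gth/2)$ and $\sin\gth=2\sin(\gth/2)\cos(\gth/2)$, I expect the cross term $\cos\ga\cos\gb$ to cancel exactly, leaving
\equ{BC=\frac{\sin(\gga/2)}{\cos(\ga/2)\cos(\gb/2)}.}
Since the denominator is a nonzero rational, $BC\in \mbb{Q}$ iff $\sin(\gga/2)\in \mbb{Q}$, completing the chain of equivalences.

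The main obstacle is precisely this algebraic telescoping: the three terms from the Euclidean law of cosines look disparate, and the cancellation only becomes visible after rewriting $AB^2,AC^2$ as $\sin^2(\ga/2)/\cos^2(\ga/2)$ and $\sin^2(\gb/2)/\cos^2(\gb/2)$ respectively and applying the identity $2\sin^2(\ga/2)\cos^2(\gb/2)+2\cos^2(\ga/2)\sin^2(\gb/2)=1-\cos\ga\cos\gb$ to consolidate the numerator. A minor book-keeping issue — that the inverse cosine of $(\cos\gga-\cos\ga\cos\gb)/(\sin\ga\sin\gb)$ recovers the spherical angle as a value in $[0,\gp]$ matching $\measuredangle BAC$ — is handled by the positivity of $\cos(\ga/2),\cos(\gb/2)$ coming from the nonantipodality hypothesis $X,Y\neq -\ora{e}$.
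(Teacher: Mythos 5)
Your proposal is correct and follows essentially the same route as the paper: the chord--arc relation for $(1)\Llra(2)$, the spherical cosine rule at $A=\ora{e}$ combined with the Euclidean law of cosines in $\Gd ABC$ for $(2)\Llra(3)$, and the rationality of $AB,AC$ from $\ga/2,\gb/2\in\mbb{Q}_{tan2}=C_{\mbb{Q}}$ for $(3)\Llra(4)$. Your closed form $BC=\sin(\gga/2)/(\cos(\ga/2)\cos(\gb/2))$ is precisely the paper's identity $\sin^2(\gga/2)=BC^2/(OB^2\,OC^2)$ since $OB=\sec(\ga/2)$ and $OC=\sec(\gb/2)$.
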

\begin{proof}
For $k=1$ the theorem is trivial. So let us assume $k>1$.
First we observe that 
\equ{\Vert\ora{x}-\ora{y}\Vert^2=(\ora{x}-\ora{y})\bigodot(\ora{x}-\ora{y})=
2(1-cos(\measuredangle POQ))=4sin^2\bigg(\frac{\measuredangle POQ}{2}\bigg).}
Hence $(1)$ and $(2)$ are equivalent.

Now we observe that if $tan(\frac{\measuredangle AOP}{2}) = \frac{s}{r},
tan(\frac{\measuredangle AOQ}{2}) = \frac{v}{u}$ such that
$r^2+s^2,u^2+v^2$ are squares then by the cosine rule of spherical
trigonometry applied to the spherical triangle $xye$ on the sphere $S^k$
we have 
\equa{
\bigg(\frac{r^2-s^2}{r^2+s^2}\bigg)\bigg(\frac{u^2-v^2}{u^2+v^2}\bigg) +
\bigg(\frac{2rs}{r^2+s^2}\bigg)\bigg(\frac{2uv}{u^2+v^2}\bigg) Cos(\measuredangle BAC) &=
\ora{x}\bigodot\ora{y}\\
\Ra sin^2\bigg(\frac{\measuredangle POQ}{2}\bigg)&=\frac{1-\ora{x}\bigodot\ora{y}}{2}\\
\Ra sin^2\bigg(\frac{\measuredangle POQ}{2}\bigg)= \frac{r^2v^2 +
s^2u^2 - 2rsuvCos(\measuredangle BAC)}{(r^2+s^2)(u^2+v^2)} &=
\frac{BC^2}{(OB^2)(OC^2)}
}
Since $r^2+s^2,u^2+v^2$ are squares the sides $OB,OC$ are rational. Hence
we conclude that $(2)$ and $(3)$ are equivalent.

The sides $AB=\frac sr,AC=\frac vu$ are already rationals. Hence $(3)$ and $(4)$ are equivalent.

Now we observe that $AP^2=\Vert\ora{x}-\ora{e}\Vert^2=4sin^2(\frac{\measuredangle AOP}{2})$
and $AQ^2=\Vert\ora{y}-\ora{e}\Vert^2=4sin^2(\frac{\measuredangle AOQ}{2})$
which are both squares of rationals because 
$\measuredangle AOP,\measuredangle AOQ\in \mbb{Q}_{tan4}$ by hypothesis. Also the distance from $-A$ to $P$ is given by $\Vert\ora{x}+\ora{e}\Vert^2=4cos^2(\frac{\measuredangle AOP}{2})$ which is also a square of a rational because $tan(\frac{\measuredangle AOP}{2})=\frac sr$ such that $s^2+r^2=\square$.
So the distances from $\pm A$ to points $P,Q$ are 2-hyperbolic rationals. The distances from $A$ to points $B,C$ are 1-elliptic rationals. This proves the theorem.
\end{proof}
Here we define a homeomorphism from the unit sphere with a point removed and the Euclidean space of the same dimension.
\begin{defn}[Definition of the homeomorphism]
\label{defn:Homeo}
Let $k>0$ be a positive integer. Let 
\equ{S^k=\{(x_0,x_1,\ldots,x_k)\mid \us{i=0}{\os{k}{\sum}}x_i^2=1\}.} 
Let $O=(0,0,\ldots,0)=0_{k+1},A=\ora{e}=(1,0,\ldots,0)=e_1^{k+1}\in \mbb{R}^{k+1}$. Let
$\mbb{R}^k$ denote the $k$-dimensional Euclidean space which is identified with 
\equ{\mbb{T}_e(S^k)=\{1\}\times \mbb{R}^k \subs \mbb{R}^{k+1}} in a standard way. 
Then consider the following homeomorphism 
\equ{\gf:S^k\bs\{-\ora{e}\}\lra \{1\}\times \mbb{R}^k}
defined as follows. Let $P=\ora{x}=(x_0,x_1,\ldots,x_k)\in S^k\bs\{-\ora{e}\}$.
Consider the unit vector $U=\ora{u}\in S^k$ which corresponds to the angular
bisector of $\measuredangle AOP$. After extending this vector suppose it meets the
tangent space $\mbb{T}_e(S^k)=\{1\}\times \mbb{R}^k$ at a point $Q$. Now define
\equ{\gf(P)=Q.} This is clearly a homeomorphism of the spaces $S^k\bs\{-\ora{e}\},\mbb{R}^k$.
We note that $\gf(A)=A$.
\end{defn}
Now we prove second main Theorem~\ref{theorem:Equivalence}.
\begin{proof}
With the notations in Definition~\ref{defn:Homeo} a 1-elliptic dense rational set 
$Y\subs \mbb{R}^k$ also containing the point $A$ such that the distance from $A$ are 1-elliptic 
rationals gets mapped to a 2-hyperbolic dense rational set $X\subs \mbb{S}^k$ containing the point $A$
such that the distances from $\pm A=\pm \ora{e}$ are $2$-hyperbolic rationals because $\gf$ is a homeomorphism.

Suppose there exists a dense rational set $X\subs \mbb{S}^k$ with antipodal pair $\{x,-x\}\subs X$. Then we can we can assume, without loss of generality, by distance preserving symmetries that $x=\ora{e}=A,-x=-\ora{e}=-A$. Then by rationality of distances we have for any $P\in X, \ sin(\frac{\measuredangle AOP}{2}),\ sin(\frac{\measuredangle AOP}{2})$ are both rationals. Hence $tan(\frac{\measuredangle AOP}{2})=\frac sr$ such that $s^2+r^2=\square$. So the set $X\bs \{-A\}$ is a dense 2-hyperbolic rational set with center of hyperbolicity $A$. 
Now using the map $\gf$ we obtain a dense 1-elliptic rational subet $Y$ in the Euclidean space $\mbb{R}^k$ with center of ellipticity as the origin. 

Conversely, if we have a dense 1-elliptic rational subset $Y$ in the Euclidean space $\mbb{R}^k$, then we can assume that the center of ellipticity is the origin and use the map $\gf$ to obtain a dense 2-hyperbolic rational subset $X$ of the sphere with center of hyperbolicity as $A$. Now we can add the point $-A$ to the dense set to obtain a dense rational subset $X\cup\{-A\}$ of the sphere containing an antipodal pair ${A,-A}$.   

This proves second main Theorem~\ref{theorem:Equivalence} .
\end{proof}
Here we prove a corollary of Theorem~\ref{theorem:RationalDistancePreservation}.
Before stating a corollary we need a definition.
\begin{defn}
Let $k>0$ be a positive integer. We say a non-empty set $X\subs \mbb{R}^k$
is an integral set if the distance set 
$\Gd(X)=\{d(x,y)\mid x,y\in \mbb{R}^k\}\subs \mbb{N}\cup\{0\}$.
\end{defn}
Now we state the corollary.
\begin{cor}
\label{cor:Integral}
Let $k>0$ be a positive integer. 
\begin{enumerate}
\item There exists an infinite 1-elliptic rational set in $\mbb{Q}^k\subs \mbb{R}^k$ not contained in any hyperplane in $\mbb{R}^k$.
\item There exists an infinite 2-hyperbolic rational set in $S^k\cap \mbb{Q}^{k+1}\subs S^k$  containing an antipodal pair with both of them as centers of hyperbolicity and the set is not contained in any 
hyperplane of $\mbb{R}^{k+1}$.
\item There exists arbitrarily large finite integral $n$-set in $\mbb{Z}^k\subs \mbb{R}^k$ 
with $n>k$ which is not contained in any hyperplane and also not contained in any $(k-1)$-
dimensional sphere in $\mbb{R}^k$.
\end{enumerate}
\end{cor}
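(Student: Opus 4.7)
The plan is to prove parts $(1)$ and $(2)$ by a joint induction on $k$, strengthening the inductive hypothesis to require that the constructed set in $\mbb{R}^k$ contain a distinguished base point from which all distances are elliptic rationals, and to derive part $(3)$ from $(1)$ by finite truncation and uniform integer scaling.

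For the base case $k=1$, the set $\{0\}\cup\{r_i : i\geq 1\}\subs\mbb{R}$ with distinct positive elliptic rationals $r_i$ is an infinite rational set satisfying the strengthened hypothesis, trivially avoiding points (hyperplanes) and pairs of points ($0$-spheres). For the inductive step, given $X^{(k)}\subs\mbb{R}^k$ with base point $O_k$ and elliptic distances from $O_k$, I would apply $\gf^{-1}$ of Definition~\ref{defn:Homeo} to obtain $Y^{(k)}=\gf^{-1}(X^{(k)})\subs S^k$. Theorem~\ref{theorem:RationalDistancePreservation} applies because elliptic rationality of each distance $|P-O_k|$ translates precisely into the angular condition $\measuredangle A O X\in\mbb{Q}_{tan4}$, so the pairwise distances of $Y^{(k)}$ on $S^k$ are rational. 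Since $\gf$ is, up to a factor of two, stereographic projection from $-\ora{e}$, it carries subspheres of $S^k$ to $(k-1)$-spheres and hyperplanes of $\mbb{R}^k$; consequently the non-hyperplane and non-$(k-1)$-sphere properties of $X^{(k)}$ imply that $Y^{(k)}$ avoids every $(k-1)$-sphere of $S^k$, which yields part~$(2)$ at level $k$. To produce $X^{(k+1)}$, scale $Y^{(k)}$ by an elliptic rational $c$ such as $3/4$ and adjoin the origin of $\mbb{R}^{k+1}$: set $X^{(k+1)}=\{0\}\cup c Y^{(k)}$, with new base point $O_{k+1}=0$ and all distances from $O_{k+1}$ equal to the elliptic rational $c$.

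To complete the induction I verify that $X^{(k+1)}$ avoids hyperplanes and $k$-spheres of $\mbb{R}^{k+1}$: if $X^{(k+1)}$ lay in a hyperplane $H$, then $0\in H$ forces $cY^{(k)}\subs H\cap(\text{sphere of radius }c)$, a $(k-1)$-sphere contradicting the non-containment of $Y^{(k)}$ in any $(k-1)$-sphere of $S^k$; a parallel argument using that two distinct $k$-spheres in $\mbb{R}^{k+1}$ intersect in a $(k-1)$-sphere handles the case of an alleged bounding $k$-sphere. Part~$(3)$ follows by selecting any finite subset $F\subs X^{(k)}$ of cardinality $n>k$ and clearing the common denominator of its finitely many rational coordinates and pairwise distances by a single uniform integer scaling, landing $F$ inside $\mbb{Z}^k$ with integer pairwise distances; non-coplanarity and non-concyclicity are affine-invariant and therefore preserved.

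The main obstacle is the interaction between the inductive step and the geometry of $\gf^{-1}$: the scaled image $cY^{(k)}$ lies on a single sphere of radius $c$ in $\mbb{R}^{k+1}$, so its $\gf^{-1}$-image in $S^{k+1}$ lies on a single latitude $k$-sphere, and naively this would already place the set on a forbidden subsphere. The resolution is that the adjoined origin of $\mbb{R}^{k+1}$ maps under $\gf^{-1}$ to the pole $A$ off the latitude, so that the combined set's avoidance of any bounding subsphere or hyperplane reduces, via the two-spheres-intersect-in-a-subsphere principle, to the inductive non-containment property of $Y^{(k)}$ within its own sphere.
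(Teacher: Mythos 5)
Your construction is essentially the paper's own proof: both lift a rational set from $\mbb{R}^k$ to $S^k$ via the half-angle homeomorphism $\gf$ of Definition~\ref{defn:Homeo} together with Theorem~\ref{theorem:RationalDistancePreservation} (your elliptic-rational scale factor $c=3/4$ playing exactly the role of the paper's $r_0$ with $1+r_0^2=\square$), adjoin the base point, and induct on dimension; your sphere/hyperplane intersection argument for non-degeneracy is in fact more explicit than the paper's one-line assertion. The only substantive omissions are that you assert rather than verify that $\gf^{-1}$ carries rational points to rational points (the paper checks this via a double-angle computation, and it is needed for part (3) to land in $\mbb{Z}^k$), and that for part (3) the finite subset must be \emph{chosen} to contain $k+1$ affinely independent points plus a point off their circumsphere, since not every subset of cardinality $n>k$ inherits the non-containment properties.
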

\begin{proof}
Let $X_1\subs S^1 \cap \mbb{Q}^2\subs \mbb{R}^2$ be the dense rational set introduced in 
Theorem~\ref{theorem:DensityCircleRationalset}. Fix a rational $r_0\in \mbb{Q}^{+}$, 
such that $1+r_0^2$ is a square of a rational. 
There are a few steps involved in the proof, for lifting rational sets
to one more higher dimension.
\begin{enumerate}
\item Dilate the set $X_1\cup\{(0,0)=0_2\}$ by $r_0$ to obtain a set $Y_2=r_0X_1\cup\{(0,0)\}$ which is 1-elliptic with center of ellipticity as the origin.
\item Consider $\{1\} \times Y_2$ a rational set in $\{1\}\times \mbb{R}^2\subs \mbb{R}^3$.
\item Using the homeomorphism $\gf$ in Theorem~\ref{theorem:DensityCircleRationalset}
we obtain a 2-hyperbolic rational set $X_2\subs S^2$ with center of hyperbolicity as $e^3_1=(1,0,0)$. Now add also the point $-e^3_1=(-1,0,0)$ to $X_2$ as the distances from $(-1,0,0)$ to points of $X_2$ are also rational. The point $-e^3_1=(-1,0,0)$ is also a center of hyperbolicity for the points of $X_2$.  
\item If $Q=(1,r_0x_1,r_0x_2)\in \{1\} \times r_0X_1$, 
\equ{\ti{Q}=\frac 1{\sqrt{1+r_0^2}}(1,r_0x_1,r_0x_2)\in S^2} and if 
\equ{\ti{Q}=(cos(\ga),sin(\ga)cos(\gb),sin(\ga)sin(\gb))}
has rational coordinates then 
\equa{P=(cos(2\ga)=2cos^2(\ga)-1,sin(2\ga)cos(\gb)&=2cos(\ga)sin(\ga)cos(\gb),\\
sin(2\ga)sin(\gb)&=2cos(\ga)sin(\ga)sin(\gb))} 
also has rational coordinates and the homeomorphism
$\gf$ in Theorem~\ref{theorem:DensityCircleRationalset} takes the point $P$ to the point $Q$, 
i.e. $\gf(P)=Q$. So the points in the rational set $X_2\subs S^2\cap \mbb{Q}^3$ has rational 
coordinates since $X \subs S^1 \cap \mbb{Q}^2$ also has rational coordinates. Also $X_2$ has an antipodal pair $\pm e^3_1=(\pm 1,0,0)$.
\end{enumerate}
By induction or repeating this lifting procedure we obtain 1-elliptic rational sets \equ{Y_2,Y_3,Y_4,\ldots\text{ such that }Y_j\subs \mbb{R}^j} in higher dimensional Euclidean spaces with origins as centers of ellipticity and 2-hyperbolic rational sets \equ{X_1,X_2,X_3,\ldots\text{ such that }X_j\subs S^j,\ \pm e^{j+1}_1\in X_j} in higher dimensional unit spheres. 
We observe that the following properties remain invariant during lifting procedure.
\begin{enumerate}
\item The rational distance property between the points of the lifted sets.
\item The rationality property of the coordinates of the lifted sets.
\item By including origin $0_k$ of $\mbb{R}^{k}$ in the rational set $Y_k$ for $k\geq 2$ and the points $\pm e^{k+1}_1\in \mbb{R}^{k+1}$ in the rational set $X_k$ for $k\geq 2$, the points of $Y_k$ do not lie on any hyperplane of $\mbb{R}^{k}$ for $k\geq 2$
and the points of $X_k$ do not lie on any hyperplane in $\mbb{R}^{k+1}$ for $k\geq 2$. 
\item Because the coordinates are all rational in the infinite rational sets $X_k,k\geq 1$ and $Y_k,k\geq 2$
we can obtain arbitrarily large finite sets with integer coordinates with non-negative integer 
distances by clearing denominators.
\end{enumerate}
This proves the corollary.
\end{proof}
Now we prove another corollary of Theorem~\ref{theorem:RationalDistancePreservation}.

\begin{cor}
\label{cor:LPythogorean}
Let $l$ be a positive rational.
\begin{enumerate}
\item There exists an  
an infinite $l$-elliptic rational set $X$ in $S^k\cap \mbb{Q}^{k+1}\subs \mbb{S}^k$ and it is not contained in any hyperplane of $\mbb{R}^{k+1}$ if and only if the elliptic curve \equ{E^{elliptic}_l: y^2=x(x+1)(x-\frac{l^2}{4})} has an element of order greater than two over rationals.  In particular if $E^{elliptic}_l$ has positive rank then such a set $X$ exists.
\item	Suppose $l\neq 2$. There exists an  
an infinite $l$-hyperbolic rational set $X$ in $S^k\cap \mbb{Q}^{k+1}\subs \mbb{S}^k$ and it is not contained in any hyperplane of $\mbb{R}^{k+1}$ if and only if the elliptic curve \equ{E^{hyperbolic}_l: y^2=x(x-1)(x-\frac{l^2}{4})} has an element of order greater than two over rationals.  In particular if $E^{hyperbolic}_l$ has positive rank then such a set $X$ exists.
\end{enumerate}	
In both cases the we can find a set $X$ which is also simultaneously 2-hyperbolic and the closure of $X$ is homeomorphic to a disjoint union of a circle and discrete set.
\end{cor}
\begin{proof}
The proof is similar to the proof of Corollary~\ref{cor:LPythogorean}, except that we do not add the antipode of $e^j_1$ to the set $X_j$ and the choice of $r_0$ is more specific. In the proof of Corollary~\ref{cor:LPythogorean}, if $cos(\ga)=\frac{1}{\sqrt{1+r_0^2}}$ and hence $r_0=\mid tan(\ga)\mid$ then the distance from $e^{j+1}_1$ to any point of $X_j\subs S^j\subs \mbb{R}^{j+1}$ is $2sin(\frac{\ga}{2})$. So if we choose $r_0$ or in other words if we choose $\ga$ such that  $sin(\frac{\ga}{2})=\frac{l}{2}\frac pq\neq 0$ where $q^2-p^2=\square,4q^2-l^2p^2=\square$ for $X_j$ to be $l$-hyperbolic and  $q^2+p^2=\square,4q^2-l^2p^2=\square$ for $X_j$ to be $l$-elliptic then we are done.  Such a choice of $r_0$ is guaranteed if and only if the corresponding elliptic curve has a point of order greater than two or of infinite order. 
\end{proof}
\begin{example}
There do not exist non-zero rationals such that $y^2+x^2=\square,y^2-x^2=\square$.
There are no points on the curve $E^{elliptic}_2:y^2=x(x+1)(x-1)$ of order greater than two since $1$ is not a congruent number (refer K.~Conrad~\cite{KConrad}).

Similarly we have in Example~\ref{Example:EllipticCurves},
the curve $E^{hyperbolic}_4:y^2=x(x-1)(x-4)$ has no points of order greater than 2. As we have already seen, it has no points of infinite order. For $p=5$ the curve reduces to $y^2=x(x-1)(x+1)$ which is non-singular and has the following points
\equ{\{\infty,(0,0),(\pm 1,0),(2,\pm 1),(3,\pm 2)\}\text{ over }\mbb{F}_5 } giving rise to a group of order $8$. Now using D.~Husem\"{o}ller~\cite{MR0868861}, Theorem 5.1(2), on Page 116, Chapter 5, Section 5 due to Nagell-Lutz, the map $r_5: (E^{hyperbolic}_4)_{Tor} \lra (E^{hyperbolic}_4)_{p=5}$ is injective. Hence there are no $q$-torsion points over rationals for a prime $q\neq 2$. There is no element of order 8 because the torsion group over the rationals whose cardinality divides 8 is not cyclic, since $\frac{\Z}{2\Z}\times \frac{\Z}{2\Z}$ is a subgroup of the torsion group over rationals.
Now we need to show that there are no torsion points of order four. An order two point has the y-co-ordinate zero. The only possible way that a line passing through an order two point is tangent at another point of the curve $y^2=x(x-1)(x-4)$ is that the order two point must not lie on the oval part of the curve. Hence the order two point must be $(4,0)$. So if we solve for the x-co-ordinate of the point of tangency of a line passing through $(4,0)$ we get $x=(4\pm 2\sqrt{3})$ which are both irrational. Hence there are no points of order four.

\end{example}
\section{\bf{An open question}}
\label{sec:Question}
We have observed the cases of lines and circles in the plane in this article for the existence of infinite and dense $l$-elliptic, $l$-hyperbolic rationals sets. We have also observed infinite $l$-elliptic, $l$-hyperbolic rational sets in higher dimensional unit spheres and Euclidean spaces.

Now we pose an interesting question in this section.
\begin{ques}
\label{question:typerationalapprox}
Let $k$ be a positive integer,\ $l$ be a positive rational. Let $f\in \mbb{Z}[X_1,X_2,\ldots,X_k]$ be a polynomial
with integer coefficients in $k$ variables. Let 
\equ{X=\{(x_1,x_2,\ldots,x_k)\in \mbb{R}^k\mid f(x_1,x_2,\ldots,x_k)=0\}}
be an infinite set. Let $(X,d) \subs (\mbb{R}^k,d)$ be the metric space, where $d$ is the usual
Euclidean metric. Determine whether there exists an infinite $l$-elliptic, $l$-hyperbolic rational set in $(X,d)$. Determine the closure of such a set when the set is maximal.
\end{ques}

\end{document}